\numberwithin{equation}{section}
\newtheorem{theorem}{Theorem}[section]
\newtheorem{lemma}[theorem]{Lemma}
\theoremstyle{definition}
\newtheorem{przykl@d}[theorem]{Example}
\newenvironment{example}{\begin{przykl@d}}{\qed\end{przykl@d}}
\theoremstyle{remark}
\newtheorem{assumption}[theorem]{Assumption}
\newcommand{\field}[1]{\mathbbm{#1}}
\newcommand{\fC}{\field{C}}
\newcommand{\fN}{\field{N}}
\newcommand{\oper}[1]{\mathbb{#1}}
\newcommand{\bP}{\oper P}
\newcommand{\bL}{\oper L}
\DeclareMathOperator{\opID}{\oper I}
\DeclareMathOperator{\opE}{\oper E}
\DeclareMathOperator{\opDelta}{\Delta}
\newcommand{\transf}[3]{#1_{#2}^{(#3)}}
\newcommand{\Tnw}[2]{\opT{#2}{#1}}
\newcommand{\Snw}{\mathscr{Q}}
\newcommand{\optr}[2]{{\oper #1}^{(#2)}}
\newcommand{\opL}[1]{\bL^{(#1)}}
\newcommand{\opP}[1]{\bP^{(#1)}}
\newcommand{\opT}[2]{{\Snw^{(#1)}_{#2}}}
\newcommand{\opN}[2]{{N^{(#1)}_{#2}}}
\newcommand{\opD}[2]{{D^{(#1)}_{#2}}}
\newcommand{\abs}[1]{\left\lvert#1\right\rvert}
\newcommand{\acc}{\mathtt{acc}}
\newcommand{\bigO}[1]{\mathcal{O}\!\left(#1\right)}
\newcommand{\ftime}[1]{\textit{#1}}
\newcommand{\hyper}[5]{\,\raisebox{-2pt}{\mbox{${}_{#1}{\text{\rm{\Large F}}}_{\!#2}$}}\!\left(\!\!\begin{array}{c|}{#3}\\[1ex]{#4} \end{array}\,\,\,{#5}\right)}
\newcommand{\texthyper}[5]{{}_{#1}{\mathrm F}_{\!#2}(#3;\,{#4}\,|\,{#5})}
\let\alphaold\alpha
\let\betaold\beta
\let\gammaold\gamma
\newcommand{\mult}[1]{%
	{
	\renewcommand{\beta}{\boldsymbol\betaold}%
	\renewcommand{\alpha}{\boldsymbol\alphaold}%
	\renewcommand{\gamma}{\boldsymbol\gammaold}%
	{\langle #1\rangle}}%
}
\newcommand{\balpha}{\boldsymbol\alpha}
\newcommand{\bbeta}{\boldsymbol\beta}
\newcommand{\wideversion}[2]{#1}
\title{New properties of a~certain method of~summation of~generalized hypergeometric series}
\author{%
  Rafa\l{} Nowak%
  \footnote{Institute of Computer Science, University of Wrocław, Poland, e-mail: \href{mailto:rafal.nowak@cs.uni.wroc.pl}{rafal.nowak@cs.uni.wroc.pl}, corresponding author}%
  \and
  Pawe\l{} Wo\'zny%
  \footnote{Institute of Computer Science, University of Wrocław, Poland, e-mail: \href{mailto:Pawel.Wozny@ii.uni.wroc.pl}{Pawel.Wozny@ii.uni.wroc.pl}}%
}
\date{%
  \svnInfoLongDate%
  \footnote{Rev.\ \svnInfoRevision, \svnInfoLongDate, \svnInfoTime}%
}
\begin{document}
\svnInfo $Id: convacc2.tex 2355 2016-08-30 13:00:23Z rno $
\maketitle
\begin{abstract}
  In a~recent paper (Appl.~Math.~Comput.~215, 1622--1645, 2009), the authors 
  proposed a~method of summation of some slowly convergent series. The purpose 
  of this note is to give more theoretical analysis for this transformation, including the
  convergence acceleration theorem in the case of summation of generalized 
  hypergeometric series. Some new theoretical results and illustrative 
  numerical examples are given.
\end{abstract}


\section{Introduction}
The \ftime{generalized hypergeometric series}
$
  \texthyper{p}{q}%
  {\alpha_1,\ldots,\alpha_{p}}
  {\beta_1,\ldots,\beta_q}{x}
$
is defined by
\begin{equation}\label{E:hser_pre}
  \hyper{p}{q}%
  {\alpha_1,\alpha_2,\ldots,\alpha_{p}}
  {\beta_1,\beta_2,\ldots,\beta_q}{x}
  \coloneqq
  \sum_{n=0}^\infty
  \frac{(\alpha_1)_n(\alpha_2)_n\cdots(\alpha_p)_n}
      {(\beta_1)_n(\beta_2)_n\cdots(\beta_q)_n}\cdot\frac{x^n}{n!}
\end{equation}
for the given non-negative integer numbers $p$ and $q$, complex parameters
$\alpha_1,\alpha_2,\ldots,\alpha_p$,\linebreak
$\beta_1,\beta_2,\ldots,\beta_q$ and $x$
(see, e.g.,~\cite[\S2.1]{AAR}), where
$(z)_0 \coloneqq 1$, $(z)_n \coloneqq
z(z+1)(z+2)\cdots(z+n-1)$ $(n\geq 1)$ is the \ftime{Pochhammer symbol}.
Using series~\eqref{E:hser_pre}, one can represent most
of~elementary and special functions, as well as constants
that arise in mathematics and physics (see, e.g.,~\cite{Olver2010}).
Hence, this class of series has many applications in the approximation
theory and numerical analysis.

Evaluating a~generalized hypergeometric series of the form~\eqref{E:hser_pre}
can be a~very challenging numerical problem.
First, its~convergence or divergence depends mainly on the values of the
numbers $p$ and $q$. Second, the variety of the parameters~$\alpha_j, \beta_j, x$ leads
to different types of the convergence or divergence of the series.

It is worth mentioning that the \ftime{sequence transformations} may be very
useful numerical tools for the summation of the series given by~\eqref{E:hser_pre}.
In the most recent edition of the book~\textit{Numerical Recipes} by Press et al., some classic sequence
transformations are described involving summation of convergent or divergent series; see~\cite[\S 5.3]{NumRec}.

Probably the best known class of sequence transformations are Pad\'e approximants which
deal with partial sums of the power series and transform them to a~double indexed sequence
of rational approximants. For further information on Pad\'e approximants, we refer to
the books by Baker~\cite{Baker75} or by Baker and Graves-Morris~\cite{BakerGravesMorris}.

As an alternative to the theory of Pad\'e approximants,
one can use the sequence transformations, for which the fundamentals were
given by Wimp \cite{Wimp}, and the extrapolation methods 
described by~Bezinski and Redivo Zaglia in~\cite{BrezinskiZaglia91} or~Sidi~\cite{Sidi03}. 
Most of the classic algorithms were also well summarized by Weniger in
the~report~\cite{Weniger} and Homeier in~\cite{Homeier}.
Undoubtedly these sequence transformations have many common properties with Pad\'e approximants,
but in the case of generalized hypergeometric series they can be much more powerful tools.
It is worth mentioning that evaluation of special functions with the help of
convergence acceleration techniques is also considered in the book by~Olver et al.~\cite[\S3.9]{Olver2010}.

In general, the success of convergence acceleration of the series~$\sum_{n=0}^\infty a_n$
very often depends on the behavior of the sequence of its \ftime{partial
sums} $s_n \coloneqq \sum_{j=0}^{n-1} a_j$ and~\ftime{remainders} $r_n \coloneqq
\sum_{j=0}^\infty a_{n+j}$. Supposing $s_n\to s$, one has $s=s_n+r_n$. We say that $\{s_n\}$ 
converges \ftime{linearly}, if $\lim_{n\to\infty}
(s_{n+1}-s)/(s_n-s)\eqqcolon\varsigma$ and $0<\abs\varsigma<1$. In the case of
$\varsigma=1$, we have \ftime{logarithmic} convergence, which is usually the
most difficult to accelerate.

If~$p-q=1$ then the series~\eqref{E:hser_pre} may be extremely slowly
convergent and practically unusable.
Therefore, the methods of convergence acceleration of~such series are particularly important.
For instance, in order to use any of so-called \ftime{Levin-type} sequence~transformations,
one should first provide good estimates $\omega_n$ of the remainders $r_n$; see, e.g., \cite{Homeier}.
This can be achieved with the help of recent results given by Willis
in~\cite{Willis2012}. Namely, he analyzed the following asymptotic relation for
the remainders of the~series~\eqref{E:hser_pre} with $p-q=1$:
\begin{equation}
  s-s_n = r_n \sim \mu x^n n^\lambda \sum_{k=0}^\infty \frac{c_k}{n^k}.
\end{equation}
He derived the recurrence relation for the coefficients $c_k$, which gives the
ability to approximate the remainders $r_n$ up to the desired order.
Willis combined this asymptotic property with a~classic technique of
extrapolation and obtained a~numerical algorithm of computing 
2-dimensional array containing approximations of the limit of the series.
Namely, the truncated estimate 
\[ \transf\omega nm \coloneqq x^n n^\lambda \sum_{k=0}^\infty \frac{c_k}{n^k}, \qquad m\in\fN, \]
is such that
\[ s = s_n + \mu \, \transf\omega nm + \bigO{x^n n^{\lambda-m}} \]
and thus the new approximation defined by the weighted average
\[
  \transf snm \coloneqq \frac{\transf \omega{n+1}m s_n - \transf\omega nm s_{n+1}}{\transf\omega{n+1}m-\transf\omega nm}
\]
is a~better approximation of the partial sum $s_n$ in the sense that the sequence $\transf snm$
converges to the limit $s$ faster than the sequence of partial sums $s_n$.
It is worth remarking that Willis' method is very efficient at the branch point $x=1$.
However, it does not seem to provide good numerical properties, if $x\approx 1$.

In this paper, we continue the analysis of $\Snw$ transformation, introduced by us in~\cite{WoznyNowak},
applied to the summation of~generalized hypergeometric series~\eqref{E:hser_pre} with~$p-q=1$.
Following the notation given in~\cite{WoznyNowak}, we consider the series
\begin{equation}\label{E:hser}
  \sum_{n=0}^\infty a_n \qquad\text{with}\qquad
    a_n \coloneqq 
    \frac{(\alpha_1)_n(\alpha_2)_n\cdots(\alpha_p)_n}
    {(\beta_1)_n(\beta_2)_n\cdots(\beta_p)_n}\,x^n
    \quad
    (x\in\fC),
\end{equation}
where $\boldsymbol\alpha\coloneqq (\alpha_1,\alpha_2,\ldots,\alpha_p)$,
$\boldsymbol\beta\coloneqq (\beta_1,\beta_2,\ldots,\beta_p)$
are vectors of~complex parameters.
Notice that the~series~\eqref{E:hser} corresponds to the following
generalized hypergeometric series
\begin{equation*}
    \hyper{p+1}{p}
    {\alpha_1,\alpha_2,\ldots,\alpha_p,1}
    {\beta_1,\beta_2,\ldots,\beta_p}
    {x}
    \!,
\end{equation*}
which is the case of~the series~\eqref{E:hser_pre} with $p-q=1$. Let us remark that $\Snw$ transformation
can be applied to the series~\eqref{E:hser_pre} also in the case with none of the
upper parameters being equal to $1$, since one can always use the following obvious relation
\[
 \hyper{r}{s}{\alpha_1,\alpha_2,\ldots,\alpha_r}{\beta_1,\beta_2,\ldots,\beta_s}{x} = 
 \hyper{r+1}{s+1}{\alpha_1,\alpha_2,\ldots,\alpha_r,1}{\beta_1,\beta_2,\ldots,\beta_s,1}{x}.
\]

The main purpose of this paper is to give more theoretical properties of the $\Snw$ transformation.
For a~detailed comparison with other methods of convergence 
acceleration, we refer to~\cite{WoznyNowak} and~\cite{Wozny2010}, where the bunch of numerical examples
involving many classic and recent sequence transformations, such as 
Aitken's iterated $\Delta^2$ process \cite{Aitken26},
Wynn's $\varepsilon$-algorithm \cite{Wynn56}, 
$t$ and $u$ variants of Levin \cite{Levin73} and Weniger \cite{Weniger92} transformations, 
Homeier's transformations \cite{Hom94}, 
method proposed by Lewanowicz and Paszkowski \cite{LewanowiczPaszkowski}, 
method proposed by {\v{C}{\'i}{\v z}ek} et~al. \cite{CZS} and 
the method of Paszkowski \cite{Paszkowski08}, were given;
see also the Brezinski's review of convergence acceleration techniques \cite{Brezinski00}.

The convergence of~the series~\eqref{E:hser} depends on the variety of
parameters $\balpha$, $\bbeta$ and the complex number $x$.
If $\abs x<1$, then the series converges absolutely.
On the unit circle, the convergence is more subtle and depends on the real
part of the parameter
$\sigma \coloneqq 1+\sum_{i=1}^p \alpha_i - \sum_{i=1}^p \beta_i$.
Namely, the series~\eqref{E:hser} converges at $x=1$, if~$\Re\sigma < 0$,
as well as for $\abs x=1$ ($x\neq 1$), if $\Re\sigma < 1$.
Many other mathematical properties of the generalized hypergeometric
series ${}_{p+1}F_p$ can be found in such classic sources as
\cite{Abramowitz}, \cite{AAR}, \cite{Magnus} or~\cite{Slater}.
It is also worth mentioning the recent research
of Miller and Paris \cite{MillerParis2011,MillerParis2012,MillerParis2013},
Rathie and Paris \cite{RathieParis2013} and Kim et al.~\cite{KimRathieParis2014},
where certain transformations were proposed to simplify the so-called \textit{order} of generalized hypergeometric
series. Several refinements of such a~class of series can be also found in the very recent work of Wang \cite{Wang2016}.

In~\cite{WoznyNowak}, the authors introduced the technique
of~summation of some slowly convergent series, which appeared to be very
efficient also in the case of the generalized hypergeometric series.
The proposed $\Snw^{(m)}$ transformation is 
defined by a~certain linear difference operator $\opL{m}_n$ in the 
following way:
\begin{equation}\label{E:Tnw:L}
  \Tnw nm \coloneqq \frac{\opL{m}_n(s_n)}{\opL{m}_n(1)}, \qquad m\in\fN.
\end{equation}
Here, and in the sequel, every difference operator acts upon $n$ and not upon $m$.
The meaning of the linear operator $\opL{m}_n$ is that it annihilates 
a~finite part of the remainder $r_n$. More precisely, it is required that
\begin{equation}
  \label{E:Lm:sat}
  \opL{m}_n\left(a_n+a_{n+1}+\ldots+a_{n+m-1}\right)= 0.
\end{equation}
In a~general case, for~arbitrary sequence $a_n$, the computation of~the
quantities $\Tnw nm$ is rather complicated and the numerical version of the
algorithm is recommended; see~\cite{Wozny2010}.
However, in the case of the series~\eqref{E:hser},
one can compute the quantities $\Tnw nm$ in a~very efficient way, i.e., using 
the~algorithm involving certain recurrence formulas for numerators and 
denominators in~\eqref{E:Tnw:L}; cf.~\cite[Alg.~1, Thm.~2]{WoznyNowak}. 

For~convenience of~reference, we briefly summarize the main results 
from~\cite{WoznyNowak} in the case of the series~\eqref{E:hser}.
For a~given vector
$\boldsymbol\gamma=(\gamma_1,\gamma_2,\ldots,\gamma_p)$,
we use the~following shorthand notation
\begin{equation}
  \label{E:mult}
  \mult{\gamma}_n\coloneqq\prod_{j=1}^p (\gamma_j)_n,\qquad n\in\fN\cup\{0\},
\end{equation}
and write $a_n = \mult{\alpha}_n/\mult{\beta}_n\,x^n$.
The operators $\opL{m}_n$ satisfying \eqref{E:Lm:sat} can be written in the form
\begin{equation}
  \label{E:Lm:p1Fp}
  \opL{m}_n = 
  \opDelta^{mp}\left(\frac{\mult{\beta}_{n+m-1}}{\mult{\alpha}_{n}\,x^{n}} 
  \opID\right),\qquad{m\in\fN};
\end{equation}
see~\cite[Eq.~(3.4)]{WoznyNowak}.
Here, the \ftime{forward difference operator} $\opDelta$ and 
the~\ftime{identity operator} $\opID$ are defined according to
$\opDelta z_n \coloneqq z_{n+1}-z_n$ and $\opID z_n \coloneqq z_n$, respectively;
higher powers  of~the~operator $\opDelta$ are defined recursively,
i.e., $\opDelta^0 z_n \coloneqq z_n$ and $\opDelta^m z_n \coloneqq \opDelta( \opDelta^{m-1} z_n), m\in\fN$.
From a~computational point of view, it is worth noting that operators
$\opL{m}_n$ can be written also in the factored form
$\opL{m}_n = \opP{m}_n\opP{m-1}_n\cdots\opP{1}_n$, where the operators
$\opP m_n$ are defined by
\begin{subequations}
  \label{E:Pm:q1Fq}
  \begin{align}
    \opP{1}_n &\coloneqq 
    \opDelta^p\left(\frac{\mult{\beta}_n}{\mult{\alpha}_n}\,x^{-n} \opID\right),
    \\
    \label{E:Pm:q1Fq:b}
    \opP{m}_n &\coloneqq \sum_{j=0}^{p} \binom{mp}{j}
    \left[\opDelta^j\prod_{j=1}^{p}
    \left({\beta_j+n+m(p+1)-j-2}\right)
    \right]\opDelta^{p-j},\wideversion{\qquad}{\quad} m\geq 2.
  \end{align}
\end{subequations}
Thus, the quantities $\Tnw nm$ can be computed using the following recursive 
scheme (see~\cite[Alg.~1]{WoznyNowak}):
\begin{subequations}\label{E:alg}
\begin{alignat}{3}
  \opN 0n &\coloneqq s_n,&\qquad \opD 0n &\coloneqq 1,\\
  \opN mn &\coloneqq \opP m_n(\opN{m-1}{n}),&\qquad 
  \opD mn &\coloneqq \opP m_n(\opD{m-1}{n}),&\qquad m&\geq 1,\\
  \Tnw{n}{m} &= \frac{\opN{m}{n}}{\opD mn}.
\end{alignat}
\end{subequations}
From eqs.~\eqref{E:Pm:q1Fq} and~\eqref{E:alg},
one may conclude that
\begin{equation}\label{E:Tnw:transformation}
 \Tnw nm = \Tnw nm( s_n, s_{n+1}, \ldots, s_{n+\ell(m)} ),
\end{equation}
where $\ell(m) = mp$, which means that $\Snw^{(m)}$ transforms the sequence $\{s_n\}_{n=0}^\infty$
to the sequence $\{\Tnw nm\}_{n=0}^\infty$ whose $n$-th element depends on $s_n, s_{n+1}, \ldots, s_{n+mp}$.
Let us remark that Levin-type sequence transformations produce the double indexed quantities
\[ \transf{\mathcal L}nm \coloneqq \transf{\mathcal L}nm( \{\omega_n\}, s_n, s_{n+1}, \ldots, s_{n+\ell(m)} ), \]
depending both on the partial sums and on the sequence of remainder estimates~$\{\omega_n\}$ (see, e.g., \cite{Homeier}),
with such relationships as $\ell(m) = m$, $\ell(m) = m+1$,
$\ell(m) = 2m$ or $\ell(m) = 3m$; see, e.g., \cite[\S2.7]{Weniger}.
For example, the 
classic variants of Levin transformation have $\ell(m) = m$ and involve the following choices of remainder estimates: 
$\omega_n = a_n$, $\omega_n = a_{n+1}$, $\omega_n = (n+1) a_n$
or $\omega_n = \frac{a_n a_{n+1}}{a_n - a_{n+1}}$; see~the paper of Levin \cite{Levin73} 
and work of Smith and Ford \cite{SmithFord79}.
The advantage of $\Snw^{(m)}$ transformation is that the information about remainder estimates $\omega_n$
is a~priori hidden in the analytic form of the~operators~$\opP m_n$, given by the explicit formulas~\eqref{E:Pm:q1Fq}.
It should be remarked that the~operators~\eqref{E:Pm:q1Fq} seem
to provide the~transformation $\Snw^{(m)}$ which is a~very powerful numerical tool for the summation
of the generalized hypergeometric series \eqref{E:hser_pre} with $p-q=1$.

Some theoretical properties of~$\Snw^{(m)}$ transformation were also given
in~\cite{WoznyNowak}. 
In the case of the series~\eqref{E:hser}, we can summarize them as follows.

If $p=1$, then $\Snw^{(m)}$ transformation is~equivalent to Wynn's
$\varepsilon$ algorithm \cite{Wynn56} in the sense that
$\Tnw{n}{m} = \varepsilon_{2m}^{(n)}$, which follows from the general property
given in~\cite[\S2.3]{WoznyNowak}.
It is worth mentioning that the explicit formula for $\varepsilon_{2m}^{(n)}$
in the case of $\texthyper{2}{1}{\alpha,1}{\beta}{x}$ has already been given
by Sidi in~\cite[Ex.~2]{Sidi81} (see also \cite[\S17.3]{Sidi03}).
The mentioned relation between $\Snw^{(m)}$ and $\varepsilon$ transformations
does not hold for $p>1$.

Supposing that the series~\eqref{E:hser} is convergent, we also know that~$\Snw^{(m)}$ transformation is \ftime{regular}
for all $m\in\fN$, if~$x\neq 1$, i.e.,
\[
\lim_{n\to\infty}\Tnw{n}{m}\left(s_n\right)
=
\hyper{p+1}{p}{\alpha_1,\alpha_2,\ldots,\alpha_p,1}
{\beta_1,\beta_2,\ldots,\beta_p}{x};
\]
see~\cite[Thm.~5]{WoznyNowak}.
What is more, it possesses the following asymptotic behavior:
\[
\hyper{p+1}{p}{\alpha_1,\alpha_2,\ldots,\alpha_p,1}
{\beta_1,\beta_2,\ldots,\beta_p}{x}-
\Tnw{n}{m}\left(s_n\right)
=\bigO{x^{{n+m(p+1)}}}, \qquad x\rightarrow 0;
\]
see~\cite[Thm.~6]{WoznyNowak}.

Moreover, a~lot~of numerical tests show that the sequence $\Tnw nm$ not only 
converges to the limit of the series but also converges much faster for bigger 
and bigger values of~$m$.

The~paper is organized as follows. In Section \ref{S:new}, we give some new 
properties of 
$\Snw^{(m)}$ transformation in the case of the series~\eqref{E:hser} including the 
main result, which is the convergence acceleration theorem.
Later, in~Section~\ref{S:experiments}, we give some
numerical examples and, in Section~\ref{S:problems}, discuss further problems concerning 
theoretical properties of $\Snw^{(m)}$ transformation.

\section{Main result}\label{S:new}
Let us consider~the~transformation $\Tnw{}m$, defined~in~\eqref{E:Tnw:L},
in~the case of the series~\eqref{E:hser}.
Following~\cite{WoznyNowak}, we define the functions $\transf{\lambda}{j}{m}(n)
\equiv \transf{\lambda}{j}{m}(n,p,x,\boldsymbol\alpha,\boldsymbol\beta)$ by
\begin{equation*}
  \transf{\lambda}{j}{m}(n)
  \coloneqq
  \wideversion{%
    \frac{\mult{\alpha}_{n+mp}\,x^{n+mp}}{\mult{\beta}_{n+m-1}}
    \left[ (-1)^{mp-j}\binom{mp}{j}
          \frac{\mult{\beta}_{n+j+m-1}}{\mult{\alpha}_{n+j}\,x^{n+j}}
          \right], \quad j=0,1,\ldots,mp,
  }{%
    (-1)^{mp-j}\binom{mp}{j}
    \frac{\mult{\alpha}_{n+mp}\,x^{n+mp}}{\mult{\beta}_{n+m-1}}
    \frac{\mult{\beta}_{n+j+m-1}}{\mult{\alpha}_{n+j}\,x^{n+j}},\;
    j=0,1,\ldots,mp,
  }
\end{equation*}
and thus, using~\eqref{E:Lm:p1Fp}, we can write the 
transformation~$\Tnw{}m$ as follows:
\begin{equation}
  \label{E:Q:lambda}
  \Tnw{n}{m} = \frac{\optr Lm(s_n)}{\optr Lm(1)}
  = \frac{\displaystyle \sum_{j=0}^{mp}{\transf{\lambda}{j}{m}(n)}\, s_{n+j}}{\displaystyle \sum_{j=0}^{mp}{\transf{\lambda}{j}{m}(n)}}.
\end{equation}
Since
\begin{equation}\label{E:lambda}
\transf{\lambda}{j}{m}(n) =
\binom{mp}{j}(-x)^{mp-j}\,\prod_{i=1}^p
\left[ (\alpha_i+n+j)_{mp-j}(\beta_i+n+m-1)_j \right]
\end{equation}
(see~\cite[Eq.~(3.11)]{WoznyNowak}), the quantity $\Tnw nm$ is a~linear 
combination of the quantities $s_n$, $s_{n+1}$, \ldots, $s_{n+mp}$ 
with~coefficients~$\transf\lambda jm(n)$
being polynomials of degree $mp^2$ in~$n$.
In the~lemma~below, we~express the~element $\Tnw nm$ in terms of~$s_n$
and~$a_n, a_{n+1}, \ldots, a_{n+mp-1}$.
In the sequel, we use the following polynomials in~$n$:
\begin{equation}\label{E:M}
{M^{(m)}(n)} \equiv M^{(m)}_0(n), \qquad
{M^{(m)}_k(n)} \coloneqq \sum_{j=k}^{mp}\transf{\lambda}{j}{m}(n), \qquad
k=0,1,\ldots,mp.
\end{equation}
\begin{lemma}\label{L:Q:a}
  The quantity $\Tnw nm$ can be written as the following linear combination 
  involving the partial sum $s_n$ and the terms
  $a_n$, $a_{n+1}$, \ldots, $a_{n+mp-1}$:
  \begin{equation}
    \Tnw{n}{m} = s_n+\sum_{k=0}^{mp-1} \frac{M^{(m)}_{k+1}(n)}{M^{(m)}(n)} \, a_{n+k},
    \qquad m,n\in\fN.
  \end{equation}
\end{lemma}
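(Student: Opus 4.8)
The plan is to start from the explicit formula~\eqref{E:Q:lambda}, namely
$\Tnw{n}{m} = \bigl(\sum_{j=0}^{mp}\transf{\lambda}{j}{m}(n)\,s_{n+j}\bigr)\big/M^{(m)}(n)$,
and to rewrite each partial sum $s_{n+j}$ in terms of $s_n$ using the telescoping identity
$s_{n+j} = s_n + a_n + a_{n+1} + \dots + a_{n+j-1} = s_n + \sum_{k=0}^{j-1} a_{n+k}$,
valid for $j = 0, 1, \dots, mp$ (the empty sum for $j=0$ giving $s_n$). Substituting this into the numerator splits it into two parts: the term $s_n \sum_{j=0}^{mp}\transf{\lambda}{j}{m}(n) = s_n\,M^{(m)}(n)$, which after dividing by $M^{(m)}(n)$ contributes exactly the leading $s_n$, and a double sum $\sum_{j=0}^{mp}\transf{\lambda}{j}{m}(n)\sum_{k=0}^{j-1} a_{n+k}$.

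The key step is then to interchange the order of summation in this double sum. For a fixed index $k$ ranging over $0, 1, \dots, mp-1$, the term $a_{n+k}$ appears precisely for those $j$ with $j \geq k+1$, i.e.\ $j = k+1, k+2, \dots, mp$. Hence the double sum equals $\sum_{k=0}^{mp-1} a_{n+k} \sum_{j=k+1}^{mp}\transf{\lambda}{j}{m}(n)$, and by the definition~\eqref{E:M} of $M^{(m)}_{k+1}(n)$ the inner sum is exactly $M^{(m)}_{k+1}(n)$. Dividing through by $M^{(m)}(n) = M^{(m)}_0(n)$ then yields
$\Tnw{n}{m} = s_n + \sum_{k=0}^{mp-1} \bigl(M^{(m)}_{k+1}(n)/M^{(m)}(n)\bigr)\,a_{n+k}$,
which is the claimed identity.

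This is essentially a bookkeeping argument, so there is no serious obstacle; the only point requiring a little care is the validity of the division, i.e.\ ensuring $M^{(m)}(n) \neq 0$ so that $\Tnw{n}{m}$ is well defined in the first place — but this is already implicit in writing down~\eqref{E:Tnw:L} and~\eqref{E:Q:lambda}, since $M^{(m)}(n) = \opL{m}_n(1)$ appears in the denominator there. One should perhaps also note explicitly that the reindexing of the telescoping sum and the swap of the two finite sums are purely formal manipulations with finitely many terms, so no convergence issues arise. I would present the computation as a short displayed chain of equalities for the numerator, then divide by $M^{(m)}(n)$ to conclude.
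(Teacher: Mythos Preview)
Your proof is correct and follows essentially the same approach as the paper's own proof: start from~\eqref{E:Q:lambda}, expand $s_{n+j}=s_n+\sum_{k=0}^{j-1}a_{n+k}$, swap the order of the finite double sum, and identify the inner sum as $M^{(m)}_{k+1}(n)$. The paper's argument is identical in structure, just presented as a single displayed chain of equalities without the additional remarks on well-definedness.
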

\begin{proof}
  First, let us observe that
  \( \Tnw{n}{m} = \sum_{j=0}^{mp}\transf{\lambda}{j}{m}(n) s_{n+j}/M^{(m)}(n). \)
  Second, we have
  \begin{multline*}
    \Tnw{n}{m}
    =\frac{\displaystyle \sum_{j=0}^{mp}\transf{\lambda}{j}{m}(n)\left(s_n+a_n+a_{n+1}+\ldots+a_{n+j-1}\right)}{M^{(m)}(n)}\\
    =\frac{\displaystyle \sum_{j=0}^{mp}\transf{\lambda}{j}{m}(n) s_n+\sum_{k=0}^{mp-1} \left(\sum_{j=k+1}^{mp}\transf{\lambda}{j}{m}(n)\right)a_{n+k}}{M^{(m)}(n)}
    =s_n+\sum_{k=0}^{mp-1} \frac{M^{(m)}_{k+1}(n)}{{M^{(m)}(n)}} a_{n+k}.
  \end{multline*}
\end{proof}

Now, we are going to give some theoretical results about the convergence
acceleration performed by $\Snw$ transformation.
The first theorem gives the necessary and sufficient condition for the
convergence acceleration. Next, we show that this condition holds under 
a~certain assumption which is discussed later --- in Subsection~\ref{comments}.
The statement concerning the convergence acceleration of the linearly 
convergent series~\eqref{E:hser} is given thereafter.
We also analyze the convergence at the points $x=\pm 1$.
\begin{theorem}\label{T}
  Consider the series~\eqref{E:hser} with partials sums $s_n$ converging to $s$.
  The transformation $\Snw^{(m)}$ accelerates the convergence of $\{s_n\}$, 
  i.e.,
  \begin{equation}
    \lim_{n\to\infty}\frac{\Tnw nm - s}{s_n-s} = 0, \qquad m\in\fN,
  \end{equation}
  if and only if
  \begin{equation}\label{E:T:cond}
    \lim_{n\to\infty} \sum_{k=0}^{mp-1} \frac{M^{(m)}_{k+1}(n)}{M^{(m)}(n)}
    \cdot \frac{a_{n+k}}{r_n} = 1,
  \end{equation}
  where $M_k(n)$ and $M(n)$ are defined by~\eqref{E:M}.
\end{theorem}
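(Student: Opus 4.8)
The plan is to start from Lemma~\ref{L:Q:a}, which expresses the transformed quantity as
\[
  \Tnw nm = s_n + \sum_{k=0}^{mp-1} \frac{M^{(m)}_{k+1}(n)}{M^{(m)}(n)}\, a_{n+k}.
\]
Subtracting the limit $s$ and using $s_n - s = -r_n$, one immediately obtains
\[
  \Tnw nm - s = (s_n - s) + \sum_{k=0}^{mp-1} \frac{M^{(m)}_{k+1}(n)}{M^{(m)}(n)}\, a_{n+k}
  = -r_n + \sum_{k=0}^{mp-1} \frac{M^{(m)}_{k+1}(n)}{M^{(m)}(n)}\, a_{n+k}.
\]
Dividing both sides by $s_n - s = -r_n$ then yields the key identity
\[
  \frac{\Tnw nm - s}{s_n - s}
  = 1 - \sum_{k=0}^{mp-1} \frac{M^{(m)}_{k+1}(n)}{M^{(m)}(n)}\cdot \frac{a_{n+k}}{r_n},
\]
valid for all $n$ large enough that $r_n \neq 0$ (which holds eventually, since otherwise $s_n$ would be eventually constant and the statement is trivial or vacuous).

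From this identity the equivalence is essentially immediate: the left-hand side tends to $0$ as $n\to\infty$ precisely when the sum on the right tends to $1$, which is exactly condition~\eqref{E:T:cond}. I would spell out both directions explicitly for completeness --- ``$\Rightarrow$'': if $\Snw^{(m)}$ accelerates convergence then the left side $\to 0$, hence the sum $\to 1$; ``$\Leftarrow$'': if the sum $\to 1$ then the left side $\to 0$, which is the definition of convergence acceleration. One should also remark that $M^{(m)}(n) \neq 0$ for sufficiently large $n$, since by~\eqref{E:lambda} and~\eqref{E:M} it is a polynomial in $n$ of degree $mp^2$ (not identically zero, its leading behaviour being governed by the $j = mp$ term $\transf{\lambda}{mp}{m}(n)$), so the quotients $M^{(m)}_{k+1}(n)/M^{(m)}(n)$ are well defined for $n$ beyond the finitely many zeros of $M^{(m)}$.

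There is essentially no obstacle here: the theorem is a direct algebraic rearrangement of Lemma~\ref{L:Q:a} combined with the definition of convergence acceleration and the elementary relation $s = s_n + r_n$. The only mild technical points to address are the non-vanishing of $r_n$ and of $M^{(m)}(n)$ for large $n$, and the observation that acceleration is a statement about an $n\to\infty$ limit, so finitely many exceptional indices are harmless. The substance of this theorem is not in its proof but in its role as a bridge: it reduces the analytic question of convergence acceleration to verifying the concrete limit~\eqref{E:T:cond}, which involves only the explicitly known polynomials $\transf{\lambda}{j}{m}(n)$ from~\eqref{E:lambda} and the ratios $a_{n+k}/r_n$ --- and it is this latter verification, carried out under the assumption discussed in Subsection~\ref{comments}, that constitutes the genuine work of the paper.
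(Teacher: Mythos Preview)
Your proof is correct and follows exactly the same approach as the paper's: both derive the identity $\dfrac{\Tnw nm - s}{s_n - s} = 1 - \dfrac{1}{r_n}\sum_{k=0}^{mp-1} \dfrac{M^{(m)}_{k+1}(n)}{M^{(m)}(n)} a_{n+k}$ directly from Lemma~\ref{L:Q:a} and read off the equivalence. One small inaccuracy in your parenthetical remark: the leading coefficient of $M^{(m)}(n)$ is not governed by the single term $\transf{\lambda}{mp}{m}(n)$ but by the sum $\sum_{j=0}^{mp} c_j^{(m)} = (1-x)^{mp}$ (cf.\ the proof of Theorem~\ref{T:M:a}), so for $x=1$ the degree drops --- though $M^{(m)}(n)$ is still not identically zero, so your conclusion stands.
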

\begin{proof}
  The theorem follows immediately from Lemma~\ref{L:Q:a}, since
  \begin{equation*}
    \lim_{n\to\infty}\frac{\Tnw nm - s}{s_n-s} = 1 -
    \lim_{n\to\infty}\left(\frac{1}{r_n} \sum_{k=0}^{mp-1}
\frac{M^{(m)}_{k+1}(n)}{M^{(m)}(n)} a_{n+k}\right).
  \end{equation*}
\end{proof}

In the next theorem, we show that the condition~\eqref{E:T:cond} is fulfilled
for a~certain class of convergent hypergeometric series~\eqref{E:hser}.
Let us remark that the further properties hold upon the following assumption.
\begin{assumption}\label{asmpt}
  We assume that the remainders of the~series~\eqref{E:hser} satisfy
  \begin{equation*}
    \lim_{n\to\infty} \frac{r_{n+1}}{r_n} = x.
  \end{equation*}
\end{assumption}

\begin{theorem}\label{T:M:a}
  If~Assumption~\ref{asmpt} holds and $x\neq 1$, then the condition
  \eqref{E:T:cond} is satisfied for all $m\in\fN$,
  and thus the transformation $\Snw^{(m)}$ accelerates the convergence of $\{s_n\}$.
\end{theorem}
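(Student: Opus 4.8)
The plan is to bypass a direct verification of~\eqref{E:T:cond} and instead compute $\lim_{n\to\infty}(\Tnw nm-s)/(s_n-s)$ outright; by Theorem~\ref{T} the vanishing of this limit is equivalent to~\eqref{E:T:cond}, and this route replaces a binomial tail-identity by a single application of the binomial theorem. Starting from~\eqref{E:Q:lambda} and using $s_{n+j}-s=-r_{n+j}$ and $s_n-s=-r_n$, I would first rewrite
\[
  \frac{\Tnw nm-s}{s_n-s}=\frac{1}{M^{(m)}(n)}\sum_{j=0}^{mp}\transf{\lambda}{j}{m}(n)\,\frac{r_{n+j}}{r_n},
\]
which is meaningful once $r_n\neq0$ (implicit in Assumption~\ref{asmpt}, which presupposes the ratios $r_{n+1}/r_n$ are defined) and $M^{(m)}(n)\neq0$ for all large~$n$ (checked below).

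Next I would establish two limits. From Assumption~\ref{asmpt}, telescoping $r_{n+j}/r_n=\prod_{i=1}^{j}(r_{n+i}/r_{n+i-1})$ over finitely many factors each tending to~$x$ gives $r_{n+j}/r_n\to x^{j}$ for every fixed $j=0,1,\ldots,mp$. From the explicit form~\eqref{E:lambda}, $\transf{\lambda}{j}{m}(n)$ is a polynomial in~$n$ of degree $mp^2$ with leading coefficient $\binom{mp}{j}(-x)^{mp-j}$, since each of the $2p$ Pochhammer factors $(\alpha_i+n+j)_{mp-j}$ and $(\beta_i+n+m-1)_j$ is monic in~$n$ and their degrees sum to $p(mp-j)+pj=mp^2$. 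Summing over~$j$ and invoking the binomial theorem, $M^{(m)}(n)=(1-x)^{mp}n^{mp^2}+\bigO{n^{mp^2-1}}$; as $x\neq1$ the leading coefficient $(1-x)^{mp}$ is nonzero, so $M^{(m)}(n)\neq0$ for large~$n$ and $\transf{\lambda}{j}{m}(n)/M^{(m)}(n)\to\binom{mp}{j}(-x)^{mp-j}/(1-x)^{mp}$.

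Passing to the limit term by term in the finite sum would then give
\[
  \lim_{n\to\infty}\frac{\Tnw nm-s}{s_n-s}=\frac{1}{(1-x)^{mp}}\sum_{j=0}^{mp}\binom{mp}{j}(-x)^{mp-j}x^{j}=\frac{x^{mp}(1-1)^{mp}}{(1-x)^{mp}}=0,
\]
using $(-x)^{mp-j}x^{j}=(-1)^{mp-j}x^{mp}$ and $mp\geq1$ (as $m,p\geq1$). By Theorem~\ref{T} this yields~\eqref{E:T:cond}, and it shows outright that $\Snw^{(m)}$ accelerates the convergence of $\{s_n\}$ for every $m\in\fN$.

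The one place that genuinely uses the hypothesis $x\neq1$---and hence the step to be most careful about---is the claim that $M^{(m)}(n)$ is eventually nonzero; without it the quotient in~\eqref{E:Q:lambda} could fail to be defined along a subsequence and the argument would collapse. The remaining asymptotic bookkeeping in~\eqref{E:lambda} (extracting the degree-$mp^2$ leading term with error $\bigO{n^{mp^2-1}}$, uniformly over the finitely many~$j$) is routine. An alternative would be to verify~\eqref{E:T:cond} head-on via $a_{n+k}=r_{n+k}-r_{n+k+1}$, whence $a_{n+k}/r_n\to x^{k}(1-x)$, together with the identity $(1-x)\sum_{k=0}^{mp-1}x^{k}\sum_{j=k+1}^{mp}\binom{mp}{j}(-x)^{mp-j}=(1-x)^{mp}$---which the route above simply packages into the binomial theorem.
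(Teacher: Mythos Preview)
Your proof is correct. It differs from the paper's in one organisational choice: the paper verifies condition~\eqref{E:T:cond} directly, writing $a_{n+k}=r_{n+k}-r_{n+k+1}$, passing to the limit to get $a_{n+k}/r_n\to x^k-x^{k+1}$, and then simplifying the resulting double sum $\sum_{k}\sum_{j>k}c_j^{(m)}(x^k-x^{k+1})$ via a change in the order of summation together with the two identities $\sum_j c_j^{(m)}=(1-x)^{mp}$ and $\sum_j c_j^{(m)}x^j=0$. You instead compute the acceleration ratio $(\Tnw nm-s)/(s_n-s)$ outright from~\eqref{E:Q:lambda}, obtaining a single sum $\sum_j c_j^{(m)}x^j/(1-x)^{mp}$ that vanishes by one application of the binomial theorem, and then invoke the equivalence of Theorem~\ref{T} to recover~\eqref{E:T:cond}. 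The ingredients---the leading-coefficient analysis of~$\transf{\lambda}{j}{m}(n)$, the role of $x\neq1$ in ensuring $M^{(m)}(n)$ has full degree $mp^2$, and Assumption~\ref{asmpt} giving $r_{n+j}/r_n\to x^j$---are identical in both arguments; your packaging simply trades the paper's telescoping double sum for a direct single-sum evaluation, which is marginally shorter. Your closing remark already identifies the paper's route as the alternative you chose to avoid.
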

\begin{proof}
  Let $\transf cjm$ be the~coefficient for the term $n^{mp^2}$ 
  in~$\transf\lambda jm(n)$, i.e.,
  \[ \transf cjm \coloneqq \binom{mp}{j}(-x)^{mp-j}, \quad j=0,1,\ldots,mp; \]
  cf.~\eqref{E:lambda}. Using binomial theorem, one can check that the following two relationships hold:
  \begin{equation}
    \label{E:sum_cj}
    \sum_{j=0}^{mp} \transf cjm = (1-x)^{mp}, \qquad
    \sum_{j=0}^{mp} \transf cjm \, x^j = 0.
  \end{equation}
  Therefore, if~$x\neq 1$, the polynomial $\transf M{}m(n)$, given 
  by~\eqref{E:M}, is~exactly of~degree $mp^2$. This yields
  \[ \lim_{n\to\infty} \frac{M^{(m)}_{k+1}(n)}{M^{(m)}(n)} = \sum_{j=k+1}^{mp} c^{(m)}_j \bigg/ \sum_{j=0}^{mp} c^{(m)}_j = (1-x)^{-mp} \sum_{j=k+1}^{mp} c^{(m)}_j. \]
  Using the fact that $a_{n+k} = r_{n+k}-r_{n+k+1}$ and
  Assumption~\ref{asmpt}, we obtain
  \begin{multline*}
    \lim_{n\to\infty} \sum_{k=0}^{mp-1} \frac{M^{(m)}_{k+1}(n)}{M^{(m)}(n)} \cdot \frac{a_{n+k}}{r_n} =
    (1-x)^{-mp} \sum_{k=0}^{mp-1} \sum_{j=k+1}^{mp} c^{(m)}_j (x^{k}-x^{k+1}) \\
    = (1-x)^{-mp} \left( \sum_{j=0}^{mp} c^{(m)}_j - \sum_{j=0}^{mp} c^{(m)}_j x^j \right).
  \end{multline*}
  Now, the result follows from~\eqref{E:sum_cj}.
\end{proof}

\subsection{Some comments}\label{comments}
Let us note that Assumption~\ref{asmpt} holds for each~series~\eqref{E:hser}
with $\abs{x}<1$, which implies the linear convergence.
It follows directly from~\cite[Thm.~1, p.~6]{Wimp} providing that
\[ x = \lim_{n\to\infty} \frac{a_{n+1}}{a_n} = \lim_{n\to\infty}
\frac{r_{n+1}}{r_n}, \]
if $\abs x<1$.
This yields the theoretical explanation of convergence acceleration 
performed by~$\Snw^{(m)}$ transformation in the case of linearly convergent 
series~\eqref{E:hser}.

Assumption~\ref{asmpt} is also satisfied if $x=1$ and all the
terms $a_n$ are real and have the same sign; see~\cite[Thm.~2, p.~26]{Clark68}.
In this case, the series~\eqref{E:hser} converges logarithmically.
It is quite remarkable that the transformation $\Snw^{(m)}$ seems to be 
very efficient also in such a~case.
However, we cannot use Theorem~\ref{T:M:a} for $x=1$ in order to justify 
that~$\Snw^{(m)}$ transformation leads to an~acceleration of the convergence.

Let us remark that Assumption~\ref{asmpt} holds also for a~certain
class of the series~\eqref{E:hser} with $x=-1$. Namely, from
\cite[Thm.~3, p.~26]{Clark68}, we obtain that it is satisfied, if
$\mult{\alpha}_n/\mult{\beta}_n$ decreases monotonically to zero and
\begin{equation}\label{E:alt:cond}
\lim_{n\to\infty} \frac{1+t_{n+1}}{1+t_n}=1,
\end{equation}
where $t_n \coloneqq a_{n+1}/a_n$.

In the next section, we give some comments and numerical examples showing
the consequences of~the given theorems.

\section{Experiments}\label{S:experiments}
Since the main purpose of this paper is to give the theoretical properties
of the $\Snw^{(m)}$ transformation, we refer to~\cite{WoznyNowak} and~\cite{Wozny2010},
for the numerical examples displaying, among other things, the detailed comparison with many
classic methods of convergence acceleration.
However, we would like to give some new examples in order to depict the conclusions
of the theory given in Section~\ref{S:new}.

We use $\Snw^{(m)}$ transformation in order to approximate the sum of the
generalized hypergeometric series
$\texthyper{3}{2}{\alpha_1,\alpha_2,1}{\beta_1,\beta_2}{x}$.
Let us remark that one can compute the array of quantities~$\Tnw nm$ 
using the recurrence formulas~\eqref{E:alg} and replacing the~operators~$\opP m_n$
with the equivalent ones:
\begin{multline}
  \label{eq:Pm:3F2}
  \opP{m}_n \coloneqq x^2(n+2m+\alpha_1-2)_2(n+2m+\alpha_2-2)_2\,\opID\\
  -2x(n+2m+\alpha_1-1)(n+2m+\alpha_2-1)
  [(n+2m+\beta_1-2)(n+2m+\beta_2-2)-m^2+m]
  \opE\\
  +(n+m+\beta_1-1)(n+m+\beta_2-1)(n+3m+\beta_1-2)(n+3m+\beta_2-2)\,\opE^2
\end{multline}
(cf.~\cite[Eq.~(3.8)]{WoznyNowak}),
where the~\ftime{forward shift operators} $\opE$ and $\opE^2$ are 
defined according to $\opE z_n\coloneqq z_{n+1}$, and $\opE^2 z_n \coloneqq z_{n+2}$.

In all the examples, we start with the column $\Tnw n0$ containing some finite
number of partial sums of the series. Next, we use recurrence 
formulas~\eqref{E:alg} and obtain the triangular array of the quantities
$\Tnw nm$.

In the first example, we consider the alternating series
\eqref{E:hser} with $x=-1$. In this case, one can use Theorem~\ref{T:M:a} in order to
explain the convergence acceleration performed by the $\Snw$ transformation.
Indeed, we obtain highly accurate approximations in the triangular array 
$\Tnw nm$.

In the second example, we consider the linearly convergent
series~\eqref{E:hser}. In order to obtain a~very
slow convergence, we take $x\approx 1$.
Again, as a~consequence of Theorem~\ref{T:M:a}, the transformation $\Snw$ 
provides a~quite good approximation of the limit of the series.

In the last example, we consider the series~\eqref{E:hser} with $x=1$,
which means the logarithmic convergence.
One can observe that the $\Snw$ transformation is very efficient, although this good
performance cannot be justified by Theorem~\ref{T:M:a}.

All the numerical experiments were made in
\textsf{Maple{\small\texttrademark}~14} system,
using floating point arithmetic with $32$ decimal digits precision.
Consequently, as~in~\cite{WoznyNowak},
we measure the \ftime{accuracy} of the~approximation $z$ of the sum $s\neq 0$ by
the number of exact significant decimal digits, in the following sense:
\[
  \acc(z)\coloneqq -\log_{10}\abs{\frac{z}{s}-1}.
\]

\begin{example}\label{EX:alt}
  Let us consider the following expansion of the integral involving the
  square root function:
  \begin{equation}\label{E:ex:alt}
    \frac{1}{z}\int_{0}^{z} \sqrt{1+y^\alpha} \mathrm{d}y
    = \hyper{2}{1}{\frac1\alpha, -\frac12}{1+\frac1\alpha}{-z^\alpha}
    \qquad (\alpha>0, \; 0<z\leq 1).
  \end{equation}
  Since the first parameter of the
  hypergeometric series is not equal to $1$, we consider the convergence
  acceleration of the series
  \wideversion{$\texthyper{3}{2}{1/\alpha,-1/2,1}{1+1/\alpha,1}{-z^\alpha}$.}
  {\[\texthyper{3}{2}{1,1/\alpha,-1/2}{1,1+1/\alpha}{-z^\alpha}.\]}
  We put $\alpha = 1/3$ and $z=1$. Thus, we obtain an
  alternating series of the form~\eqref{E:hser} with
  \begin{equation} \label{E:ex:alt:a} 
    a_{n} = \frac{3 (-\tfrac12)_n }{(n+3) n!} \, x^n, \qquad x=-1, 
  \end{equation}
  converging to
  $(44\sqrt2-16)/35\approx
   1.3207256213.
  $
  The convergence is quite slow since straightforward computation gives
  (underline depicts the correct digits):
  \wideversion{\[
  s_{   10} = \underline{1.32}19178336, \quad
  s_{  100} = \underline{1.32072}97959, \quad
  s_{ 1000} = \underline{1.3207256}346, \quad
  s_{10000} = \underline{1.3207256213}.
  \]}{%
  \begin{alignat*}{2}
  s_{   10} &= \underline{1.32}19178336,& \qquad
  s_{  100} &= \underline{1.32072}97959, \\
  s_{ 1000} &= \underline{1.3207256}346,& \qquad
  s_{10000} &= \underline{1.3207256213}.
  \end{alignat*}
  }
  This yields the following number of exact 
  significant decimal digits of the limit of the series:
  \[
  \acc(s_{10}) = 3.0,\quad
  \acc(s_{100}) = 5.5,\quad
  \acc(s_{1000}) = 8.0,\quad
  \acc(s_{10000}) = 10.5.
  \]
  One can check that 
  $t_n \coloneqq a_{n+1}/a_{n} = -[(2n-1)(n+3)]/[(2n+2)(n+4)]$, 
  and thus the equation~\eqref{E:alt:cond} holds.
  Since the fraction in~\eqref{E:ex:alt:a} decreases monotonically to zero, we 
  conclude that Assumption~\ref{asmpt} is satisfied;
  cf.~\cite[Thm.~3, p.~26]{Clark68}.
  From Theorem~\ref{T:M:a}, we obtain that $\Snw^{(m)}$ transformation
  accelerates the convergence of the considered series. Indeed, the accuracy of
  the quantities $\Tnw nm$ increases 
  for bigger and bigger values of $m$; see Table~\ref{TAB:ex:alt}. It is worth 
  noting that the quantity $\Tnw17$ gives about $21$ digits of accuracy, 
  while the partial sums $s_1$, $s_2$, \ldots, $s_{15}$, that it depends on
  (see~Lemma~\ref{L:Q:a}), give less than $4$ digits.
  \begin{table}[htb]
    \caption{\label{TAB:ex:alt}
      Values of $\acc(\Tnw nm)$ for the hypergeometric series~\eqref{E:ex:alt}
      with $\alpha=1/3$ and $z=1$.}
    \center
    \begin{tabular}{c|rrrrrrrr}
      $n\diagdown m$ & \multicolumn{1}{c}{$ 0$}  & \multicolumn{1}{c}{$ 1$}  &
      \multicolumn{1}{c}{$ 2$}  & \multicolumn{1}{c}{$ 3$}  &
      \multicolumn{1}{c}{$ 4$}  & \multicolumn{1}{c}{$ 5$}  &
      \multicolumn{1}{c}{$ 6$}  & \multicolumn{1}{c}{$ 7$} \\\hline
      $ 1$ & $ 0.6$ & $ 3.1$ & $ 6.1$ & $ 9.5$ & $12.6$ & $15.9$ & $19.3$ & 
      $21.7$\\
      $ 2$ & $ 1.4$ & $ 3.8$ & $ 6.9$ & $10.3$ & $13.4$ & $16.9$ & $19.9$\\
      $ 3$ & $ 1.8$ & $ 4.4$ & $ 7.6$ & $11.0$ & $14.2$ & $17.9$ & $20.6$\\
      $ 4$ & $ 2.1$ & $ 4.9$ & $ 8.2$ & $11.6$ & $14.9$ & $18.9$\\
      $ 5$ & $ 2.3$ & $ 5.3$ & $ 8.7$ & $12.1$ & $15.6$ & $20.6$\\
      $ 6$ & $ 2.5$ & $ 5.6$ & $ 9.2$ & $12.7$ & $16.2$\\
      $ 7$ & $ 2.7$ & $ 5.9$ & $ 9.6$ & $13.1$ & $16.7$\\
      $ 8$ & $ 2.8$ & $ 6.2$ & $10.0$ & $13.6$\\
      $ 9$ & $ 2.9$ & $ 6.4$ & $10.4$ & $14.0$\\
      $10$ & $ 3.0$ & $ 6.6$ & $10.7$\\
      $11$ & $ 3.1$ & $ 6.8$ & $11.0$\\
      $12$ & $ 3.2$ & $ 7.0$\\
      $13$ & $ 3.3$ & $ 7.2$\\
      $14$ & $ 3.4$\\
      $15$ & $ 3.5$
    \end{tabular}
  \end{table}
\end{example}

\begin{example}\label{EX:linconv}
  Let us consider the linearly convergent series
  \begin{equation*}
    \hyper{2}{1}
    {\frac16,\frac13}
    {\frac12}
    {\frac{25}{27}} =
    \hyper{3}{2}
    {\frac16,\frac13,1}
    {\frac12,1}
    {\frac{25}{27}}
    = \frac34\sqrt3 \approx 1.2990381057;
  \end{equation*}
  see \cite{ZuckerJoyce2001}.
  The straightforward computation yields
  \[ s_{10}   = \underline{1.2}573432291, \quad
     s_{100}  = \underline{1.29903}15516, \quad
     s_{200} = \underline{1.29903810}41.
  \]
  This means the following accuracies:
  \[
  \acc(s_{10}) = 1.5, \quad
  \acc(s_{100}) = 5.3, \quad
  \acc(s_{200}) = 8.9.
  \]

  However, using only partial sums $s_{1}, s_{2}, \ldots, s_{25}$, one can 
  compute the triangular array of the~elements $\Tnw nm$ for~$1\leq n+2m\leq 
25$, $0\leq m\leq 12$.
  For instance, we have
  \[
    \Tnw{1}{10} = \underline{1.29903810}82, \quad
    \Tnw{1}{11} = \underline{1.29903810}60, \quad
    \Tnw{1}{12} = \underline{1.2990381057},
  \]
  which yields the following accuracies:
  \[
    \acc(\Tnw{1}{10}) = 7.0, \quad
    \acc(\Tnw{1}{11}) = 7.9, \quad
    \acc(\Tnw{1}{12}) = 8.7.
  \]
  The following is worth remarking: $1^\circ$ for the fixed subscript $n$, the quantities 
  $\Tnw nm$ (for consecutive values of $m$) are of better and better accuracy 
  (namely, every next element has, more or less, one exact decimal digit more);
  $2^\circ$ since Assumption~\ref{asmpt} is satisfied, this is in agreement
  with~Theorem~\ref{T:M:a}, providing that
  \[
    \lim_{n\to\infty} \frac{\Tnw nm - s}{s_n - s} = 0,
  \]
  which means that each column of the array $\Tnw nm$ converges to $s$ faster than the sequence of partial sums $s_n$;
  $3^\circ$ the accuracy of the quantities $\Tnw nm$ shows the
  faster and faster convergence for consecutive columns of the table.
\end{example}

\begin{example}\label{EX:logconv1}
  Let us consider the complex vectors of~parameters:
  $\balpha=(1.7+2.5i, 1.5+2.0i)$, $\bbeta=(1.3-3.0i, 3.2-4.0i)$.
  The hypergeometric series
  \[ \texthyper{3}{2}{\alpha_1,\alpha_2,1}{\beta_1,\beta_2}{1}
  \approx 0.7808031959823745-0.2060305207425406 i \]
  is extremely slowly convergent. Indeed, the
  partial sums $s_{n}$ for $n=10^3, 10^5, 10^6$ give accuracy of about
  $2.3$, $2.9$ and $3.2$ exact significant decimal digits, respectively.
  In contrast, using only partial sums $s_1, s_2, \ldots, s_{15}$, the 
  transformation $\Snw$ gives the quantities $\Tnw nm$ being very good 
  approximations of the limit; see
  Table~\ref{TAB:logconv1}.
  The reason for this is that the sufficient condition in~Theorem~\ref{T} is
  arguably satisfied. Indeed, the straightforward computation yields that
  numerical values of
  \[
    \sum_{k=0}^{mp-1} \frac{M^{(m)}_{k+1}(n)}{M^{(m)}(n)}
    \cdot \frac{a_{n+k}}{r_n}
  \]
  approach $1.0$ when $n\to\infty$.
  Moreover, these values are closer and closer to $1.0$ for bigger and bigger
  values of $m$. We believe this explains the convergence acceleration performed
  by $\Snw$ transformation; cf. Table~\ref{TAB:logconv1}.

  \begin{table}[htb]
    \caption{\label{TAB:logconv1} Values of~$\acc(\Tnw nm)$ for the
      hypergeometric series in Example~\ref{EX:logconv1}.}
    \center
    \begin{tabular}{c|rrrrrrrr}
      $n\diagdown m$ & \multicolumn{1}{c}{$ 0$}  & \multicolumn{1}{c}{$ 1$}  &
      \multicolumn{1}{c}{$ 2$}  & \multicolumn{1}{c}{$ 3$}  &
      \multicolumn{1}{c}{$ 4$}  & \multicolumn{1}{c}{$ 5$}  &
      \multicolumn{1}{c}{$ 6$}  & \multicolumn{1}{c}{$ 7$} \\\hline
      $ 1$ & $ 0.4$ & $ 2.6$ & $ 4.6$ & $ 6.6$ & $ 8.5$ & $10.3$ & $12.2$ &
      $14.1$\\
      $ 2$ & $ 0.7$ & $ 3.0$ & $ 5.1$ & $ 7.0$ & $ 9.0$ & $10.9$ & $12.7$\\
      $ 3$ & $ 0.9$ & $ 3.4$ & $ 5.5$ & $ 7.5$ & $ 9.4$ & $11.3$ & $13.2$\\
      $ 4$ & $ 1.1$ & $ 3.6$ & $ 5.8$ & $ 7.8$ & $ 9.8$ & $11.7$\\
      $ 5$ & $ 1.2$ & $ 3.9$ & $ 6.1$ & $ 8.2$ & $10.2$ & $12.1$\\
      $ 6$ & $ 1.3$ & $ 4.0$ & $ 6.3$ & $ 8.5$ & $10.5$\\
      $ 7$ & $ 1.3$ & $ 4.2$ & $ 6.5$ & $ 8.7$ & $10.8$\\
      $ 8$ & $ 1.4$ & $ 4.3$ & $ 6.7$ & $ 9.0$\\
      $ 9$ & $ 1.4$ & $ 4.4$ & $ 6.9$ & $ 9.2$\\
      $10$ & $ 1.5$ & $ 4.5$ & $ 7.1$\\
      $11$ & $ 1.5$ & $ 4.6$ & $ 7.2$\\
      $12$ & $ 1.5$ & $ 4.7$\\
      $13$ & $ 1.5$ & $ 4.8$\\
      $14$ & $ 1.6$\\
      $15$ & $ 1.6$
  \end{tabular}
  \end{table}
\end{example}
%
%

\section{Further problems}\label{S:problems}
Let us remark that the statement in~Theorem~\ref{T:M:a} does not consider the 
case of $x=1$ in the series~\eqref{E:hser}.
This case leads to the logarithmic convergence which is usually the most 
difficult to sum.
Although we cannot use Theorem~\ref{T:M:a} in order to prove the convergence
acceleration for the $\Snw$ transformation, one can always try to check
if~condition~\eqref{E:T:cond} is satisfied.
This is exactly what was depicted in Example~\ref{EX:logconv1}.
We strongly believe that the condition~\eqref{E:T:cond} is fulfilled for all the
classes of logarithmically convergent series satisfying Assumption \ref{asmpt}.
However, we do not know how to prove it in a~general way.
Let us remark that in this~case, the polynomials $M^{(m)}(n)$,
given by~\eqref{E:M}, are no longer of the~degree~$mp^2$;
cf.~\eqref{E:sum_cj}.
Therefore, the proof of Theorem~\ref{T:M:a} needs to be different in such a~case.
However, one can try to check if the condition~\eqref{E:T:cond} is satisfied, provided
that the~terms and~remainders of the~series~\eqref{E:hser} have, at~least
formally, the following asymptotic representation:
\begin{equation*}
  \frac{a_{n+1}}{a_n} \sim 1 + \frac{{b_1}}{n} + \frac{{b_2}}{n^2} + 
\ldots, \qquad
  \frac{r_{n+1}}{r_n} \sim 1 + \frac{{d_1}}{n} + \frac{{d_2}}{n^2} + 
\ldots.
\end{equation*}
Hence, one can check that
\begin{alignat*}{2}
  {b_1} &= \sum_{j=1}^p\alpha_j-\sum_{j=1}^p\beta_j,& \qquad {b_2} &= 
\frac12\left( b_1^2 - \sum_{j=1}^p\alpha_j^2+\sum_{j=1}^p\beta_j^2 \right),\\
  {d_1} &= b_1+1,& \qquad {d_2} &= (b_1^2+b_1 b_2+b_1+b_2)/b_1.
\end{alignat*}
Thus, using the fact that
\[
  \frac{a_{n+k}}{r_n} = \frac{r_{n+1}}{r_n} \frac{r_{n+2}}{r_{n+1}}
  \cdots \frac{r_{n+k}}{r_{n+k-1}} \left(1-\frac{r_{n+k+1}}{r_{n+k}}\right),
\]
one can obtain, after some algebra, that
\[ \frac{1}{r_n} \sum_{k=0}^{mp-1} \frac{M^{(m)}_{k+1}(n)}{M^{(m)}(n)} a_{n+k}
\sim 1 + \frac{\pi_2}{n^2} + \frac{\pi_3}{n^3} + \ldots. \]
This compared with~condition~\eqref{E:T:cond} in~Theorem~\ref{T:M:a} explains
the acceleration~of~the convergence obtained in Example~\ref{EX:logconv1}.
But still the~statement in~Theorem~\ref{T} is an~open problem in the case
of a~general form of the~series~\eqref{E:hser} with~$x=1$.

Maybe even more interesting is to explain why we observe that the
quantities $\Tnw nm$ give better and~better approximation of the limit of the
series, for bigger and bigger values of~$m$; cf.~Tables~\ref{TAB:ex:alt}
and~\ref{TAB:logconv1}.
We believe that the main reason is~the relationship~\eqref{E:Lm:sat} 
satisfied by the operators~$\opL m_n$ defining~$\Snw^{(m)}$ transformation.
However, the proof of
\[ \lim_{n\to\infty} \frac{\Tnw n{m+1}-s}{\Tnw nm-s} = 0, \qquad m\in\fN, \]
seems to be very difficult not only in the case of logarithmic convergence,
but also for a~linearly convergent series~\eqref{E:hser}.
We leave it for the further research.

\section*{Acknowledgements}
The authors would like to thank the anonymous referees for valuable remarks and suggestions,
as well as for pointing out many interesting possibilities
of future research in the area of series acceleration. 
We would also like to express our gratitude to Prof.~E.~J.~Weniger for his interesting
comments on our previous research and for stimulating us to further work during the 
conference \textit{Approximation and extrapolation of convergent and divergent sequences and series}
(Luminy, 2009).


\bibliographystyle{abbrv}
\bibliography{bibliography}
\end{document}